\theoremstyle{definition}
\newtheorem{lemma}{Lemma}
\newtheorem{theorem}[lemma]{Theorem}
\newtheorem*{theorem*}{Theorem}
\newtheorem{corollary}[lemma]{Corollary}
\newtheorem{definition}[lemma]{Definition}
\newtheorem{remark}[lemma]{Remark}
\begin{document}

\title[On the stability of Einstein metrics carrying a special twisted spinor]{On the stability of Einstein metrics \\ carrying a special twisted spinor}

\author{Diego Artacho}
\address{D.~Artacho: Department of Mathematics, KU Leuven,
Celestijnenlaan 200B, 3001 Leuven, Belgium}
\email{diego.artachodeobeso@kuleuven.be}

\begin{abstract}
   We prove linear semi-stability for a large class of Einstein metrics of non-positive scalar curvature. More precisely, we show that any Einstein $n$-manifold with non-positive scalar curvature carrying a parallel twisted pure spin$^r$ spinor is linearly semi-stable, under mild restrictions on $n$ and $r$. We thus extend the parallel spin and spin$^c$ stability results of Dai--Wang--Wei. As an application, our result implies linear semi-stability for all negative quaternion-K{\"a}hler manifolds of dimension greater than $8$. 
\end{abstract}

\maketitle

\section{Introduction} \label{sec:intro}

A Riemannian metric $g$ on a smooth manifold $M$ is said to be Einstein if its Ricci curvature is a constant multiple of the metric,
\[
\mathrm{Ric}_g = E\, g ,
\]
for some real constant $E$ \cite{B87}. For compact manifolds, Einstein metrics are precisely the critical points of the Einstein--Hilbert functional
\[
\mathcal{S} : \mathcal{M}_1 \to \mathbb{R},
\qquad 
\mathcal{S}(g) := \int_M \mathrm{scal}_g \lvert\mathrm{vol}_g\rvert 
\]
on the space $\mathcal{M}_1$ of unit-volume Riemannian metrics, where $\lvert \mathrm{vol}_g \rvert$ denotes the volume density determined by $g$.

Given such a metric $g$, a fundamental problem in geometric analysis is to understand the second variation of $\mathcal{S}$ at $g$. We refer to the recent survey \cite{SS25} for a comprehensive overview. An Einstein metric is called \emph{stable} if it is a local maximum of $\mathcal{S}$ restricted to the space
\[
\mathcal{C} := \{ g \in \mathcal{M}_1 \mid \mathrm{scal}_g \ \text{is constant on } M \}.
\]
The linearisation of this notion leads to the definition of \emph{linear (semi-)stability}: an Einstein metric $g$ is \emph{linearly stable} (resp.\ \emph{semi-stable}) if the second variation $\mathcal{S}''_g$ is negative-definite (resp.\ negative semi-definite) on the space of transverse-traceless tensors
\[
\mathscr{S}^2_{\mathrm{tt}}
  := \{ h \in \mathscr{S}^2(M) \mid \mathrm{tr}_g h = 0,\ \delta_g h = 0 \},
\]
where $\mathscr{S}^2(M):=\Gamma(\mathrm{Sym}^2 T^*M)$ and $\delta_g$ denotes the divergence.

It is well known that linear semi-stability is governed by the spectrum of the so-called \emph{linearised Einstein operator}; namely, it corresponds to 
\[
\nabla^*\nabla - 2\mathring{R} \ge 0 
\]
on $\mathscr{S}^2_{\mathrm{tt}}$, where $\nabla$ is the Levi-Civita connection of $g$ and $\mathring{R}$ denotes the curvature action on symmetric $2$-tensors -- see~\eqref{eq:ring}. Linear stability corresponds to the inequality being strict. 

Several conditions for stability have been obtained in terms of the curvature \cite{K78,F79,BCG91,IN05,K15,BK25}. A long-standing conjecture in the field asserts that every Einstein metric of non-positive scalar curvature is stable. This problem has attracted significant attention, with substantial progress in several geometric settings.  
For example:
\begin{itemize}
\item in the presence of a parallel spinor, Dai--Wang--Wei \cite{DWW05} proved linear semi-stability by exploiting a spinorial Bochner identity; in fact, full $\mathcal{S}$-stability holds in this setting -- see \cite[Cor.~5.7]{SS25}; 
\item in the K{\"a}hler--Einstein case, an extension of this method using spin$^{c}$ geometry yields linear semi-stability in the negative scalar curvature regime \cite{DWW07};
\item more recently, Kr{\"o}ncke--Semmelmann \cite{KS24} proved strict stability for negative quaternion-K{\"a}hler manifolds using representation-theoretic methods. 
\end{itemize}

In this paper, we prove linear semi-stability for a broad class of Einstein metrics of non-positive scalar curvature, namely those admitting a \emph{parallel twisted pure spin$^r$ spinor}. Our main result is as follows.

\begin{theorem}\label{thm:main}
Let $(M^n,g)$ be a Riemannian manifold carrying a parallel twisted pure spin$^r$ spinor, for some $r \ge 3$, $r\neq 4$, $n\neq 8$, and $n+4r-16 \neq 0$. Then $g$ is Einstein and, if $\mathrm{scal}_g \le 0$, it is linearly semi-stable. \qed
\end{theorem}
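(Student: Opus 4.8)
The plan is to follow the spinorial Bochner strategy pioneered by Dai--Wang--Wei, adapted to the twisted spin$^r$ setting. The starting point is the parallel twisted pure spin$^r$ spinor $\psi$ on $(M^n,g)$. First I would verify that the existence of such a parallel spinor forces $g$ to be Einstein: differentiating the parallel condition $\nabla\psi = 0$ and contracting with Clifford multiplication yields a curvature term that, after using the purity assumption to identify the holonomy reduction, produces $\mathrm{Ric}_g = E\,g$ for the appropriate constant $E$ (this is the "Then $g$ is Einstein" half of the statement; the conditions $r\neq 4$, $n\neq 8$, $n+4r-16\neq 0$ should be exactly what is needed so that the relevant representation-theoretic multiplicities do not degenerate). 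From here on assume $\mathrm{scal}_g\le 0$, equivalently $E\le 0$.

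Next I would set up the deformation operator. Given a transverse-traceless symmetric $2$-tensor $h\in\mathscr{S}^2_{\mathrm{tt}}$, interpret $h$ as a $T^*M$-valued $1$-form and build from it, together with the parallel spinor $\psi$, a twisted-spinor-valued object — concretely a section $\Phi_h$ of $\Sigma M \otimes TM$ (or the appropriate spin$^r$ analogue) obtained by Clifford-multiplying $h$ into $\psi$. The key identity to establish is a Weitzenböck/Bochner formula relating $\langle (\nabla^*\nabla - 2\mathring R)h, h\rangle$ to $\lVert D\Phi_h\rVert^2$ (or $\lVert\nabla\Phi_h\rVert^2$) plus lower-order terms controlled by $E$, where $D$ is a suitable Dirac-type operator on the twisted bundle. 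The point of the TT conditions $\mathrm{tr}_g h = 0$ and $\delta_g h = 0$ is precisely to kill the boundary/error terms so that the identity closes up; the curvature term $\mathring R$ gets absorbed because the curvature of the twisted bundle acts on $\psi$ in a way pinned down by the purity of the spinor.

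Then the semi-stability inequality follows by integration: integrating the Bochner identity over the closed manifold $M$, the $\lVert D\Phi_h\rVert^2$ term is non-negative, and the remaining term is a non-negative multiple of $-E\,\lVert h\rVert^2 \ge 0$ (this is where $\mathrm{scal}_g\le 0$ enters with the correct sign), yielding $\int_M\langle(\nabla^*\nabla - 2\mathring R)h,h\rangle \ge 0$, i.e.\ linear semi-stability. I would be careful that the operator $\nabla^*\nabla - 2\mathring R$ appearing here is exactly the linearised Einstein operator on $\mathscr{S}^2_{\mathrm{tt}}$ as defined in the introduction via~\eqref{eq:ring}, so no extra Lichnerowicz-type correction terms are unaccounted for.

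The main obstacle I expect is the Bochner identity itself: in the untwisted spin case one exploits that $\Sigma M$ carries a flat "internal" factor, and in the spin$^c$ case one uses the line bundle; here the twisting bundle is more complicated, so one must control the curvature of $\Sigma M\otimes E$ acting on the parallel pure spinor and show that the unwanted curvature contributions vanish or combine into $\mathring R(h)$. This is where the purity hypothesis on the spinor does the real work — it should guarantee that the twisted curvature endomorphism, restricted to the orbit of $\psi$, is determined entirely by the Riemann tensor of $g$ — and where the numerical exclusions on $n$ and $r$ must be invoked to rule out exceptional holonomy representations in which this identification fails. A secondary technical point is verifying that the map $h\mapsto\Phi_h$ is injective enough (or that its failure of injectivity lies in a harmless subspace) so that the inequality genuinely controls all of $\mathscr{S}^2_{\mathrm{tt}}$.
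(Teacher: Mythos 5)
Your overall skeleton --- build a spinor-valued object $\Phi(h)$ from $h$ and the parallel spinor, prove a Bochner-type identity relating $\langle(\nabla^*\nabla-2\mathring R)h,h\rangle$ to $\lVert\mathcal D\Phi(h)\rVert^2$ plus curvature terms, and integrate --- is indeed the paper's framework. But there is a genuine gap at exactly the point you wave at hardest: the extra curvature contributions do \emph{not} ``vanish or combine into $\mathring R(h)$''. The Weitzenb\"ock computation leaves behind a genuinely new term, $-\tfrac12\langle\hat\eta^{\psi}_{kl}\circ h\circ\hat\Theta_{kl},h\rangle$, involving the curvature $\Theta$ of the auxiliary bundle and the $2$-forms $\eta^{\psi}_{kl}$ associated to the spinor, and this term has no definite sign. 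The heart of the proof is controlling it, and your sketch supplies none of the three ingredients needed: (i) the proportionality $\hat\Theta_{kl}=\frac{-\mathrm{scal}_g}{n(\frac n4+2r-4)}\hat\eta^{\psi'}_{kl}$ valid for parallel twisted pure spinors (this is where the hypotheses $r\ge3$, $r\neq4$, $n\neq8$, $n+4r-16\neq0$ actually enter, and it is also what gives the Einstein condition --- not a holonomy-reduction argument as you suggest); (ii) the bound $\lvert\langle\hat\eta^{\psi}_{kl}\circ h\circ\hat\eta^{\psi}_{kl},h\rangle\rvert\le r(r-1)\lvert h\rvert^2/\lvert\psi'\rvert^2$ obtained from Cauchy--Schwarz together with the purity relation $\hat\eta^{\psi'}_{kl}\circ\hat\eta^{\psi'}_{kl}=-\mathrm{Id}$; and (iii) the exact evaluation $\lvert\psi'\rvert^2=r(r-1)/(\tfrac n4+2r-4)$, which follows from the Ricci identity for spin$^r$ spinors. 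Only when all three constants are combined does the error term come out bounded by exactly $\lvert E\rvert\,\lvert h\rvert^2$, which cancels the $-\langle\mathrm{Ric}\circ h,h\rangle=-E\lvert h\rvert^2$ term. Your assertion that ``the remaining term is a non-negative multiple of $-E\lVert h\rVert^2$'' misses that semi-stability here rests on a borderline exact cancellation between a positive term and an indefinite one, not on a sum of manifestly non-negative terms; without the constants matching, the argument does not close.

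Two smaller corrections: the transverse-traceless conditions play no role in the Bochner identity (the paper's inequality holds for all symmetric $2$-tensors; TT enters only through the definition of semi-stability), and the injectivity concern about $h\mapsto\Phi(h)$ is moot because that map is a parallel isometry of bundles.
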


The class of manifolds that carry parallel twisted pure spin$^r$ spinors will be introduced in Section \ref{sec:prelim}. For now, let us remark that this is a vast family: it includes, for example, all even-dimensional manifolds with special Riemannian holonomy \cite[Sec. 4]{HS19}. 

Our proof generalises the strategy initiated by Dai--Wang--Wei: we construct an isometric bundle map from symmetric $2$-tensors to twisted spinor-valued $1$-forms, and use it to relate the linearised Einstein operator to the square of a Dirac-type operator. We do this in Section \ref{sec:parallel}. The key new ingredient comes from the algebraic identities satisfied by the $2$-forms associated to twisted pure spinors \cite{HS19}, which yield a curvature relation that allows us to control the extra terms appearing in a general Bochner-like formula that we obtain in Section \ref{sec:parallel}. This makes it possible to derive the required lower bound for the linearised Einstein operator in the non-positive scalar curvature case, which we accomplish in Section \ref{sec:pure}. 

\section{Preliminaries}
\label{sec:prelim}

Let $(M,g)$ be an $n$-dimensional Riemannian manifold. Throughout the paper, we will work locally: we fix a point $x \in M$ and a local orthonormal frame $(e_1 , \dots , e_n)$ around it such that $\nabla e_i (x) = 0$ for all $i$. 

For vector fields $X,Y$, the Riemannian curvature endomorphism is given by  
\[
R_{XY} := \left[ \nabla_X , \nabla_Y \right] - \nabla_{[X,Y]} ,  
\]
and for $i,j,k,l \in \{1 , \dots n \}$
\[
R_{ijkl} := g \left( R_{e_i e_j} e_k , e_l \right) . 
\]
The Ricci and scalar curvature are given, respectively, by 
\[
\mathrm{Ric}_{ij} := \mathrm{Ric}_{g}(e_i,e_j) = R_{ikkj} , \qquad \mathrm{scal}_g = R_{ikki} , 
\]
where we adopt the convention that a summation is taken whenever indices are repeated. 

The curvature operator acts on symmetric $2$-tensors by 
\begin{equation}\label{eq:ring}
(\mathring{R} h) (X,Y) := h \left( R_{X e_i} e_i , Y \right)   . 
\end{equation}

We say that $M$ is \emph{spin$^r$} if there exists an oriented Euclidean rank-$r$ vector bundle $F \to M$ such that $TM \oplus F$ is spin \cite{EH16,AM21}. Such an $F$ is called an \emph{auxiliary vector bundle} for $TM$. If $\Sigma M$ and $\Sigma F$ denote the locally defined complex spinor bundles of $TM$ and $F$ respectively, the fact that $TM \oplus F$ is spin implies that 
\[
\Sigma(M,F,m) := \Sigma M \otimes (\Sigma F)^{\otimes m}
\]
is an honest vector bundle over $M$ for all odd values of $m$. If $M$ is itself spin, even values of $m$ are also allowed. This bundle is equipped with a natural Hermitian product, which we denote by $\langle \cdot , \cdot \rangle$. Sections of these bundles are called \emph{spin$^r$ spinors}, \emph{twisted spinors} or simply \emph{spinors}, and they can be Clifford-multiplied by sections of $TM \oplus F$ in a natural way. 

Each metric connection $\theta$ on $F$, together with the Levi-Civita connection on $TM$, uniquely determines a Hermitian connection $\nabla^{\theta}$ on $\Sigma(M,F,m)$. 

We now fix a local orthonormal frame $(f_1 , \dots , f_r)$ of $F$ around $x$. Each spinor $\psi \in \Gamma(\Sigma(M,F,m))$ determines a $2$-form on $M$ for each $1 \le k,l \le r$: 
\begin{equation*}
\eta^{\psi}_{kl}(X,Y) := \Re \left\langle X \wedge Y f_k f_l \cdot \psi , \psi \right\rangle , 
\end{equation*}
where $X \wedge Y := XY + g(X,Y)$ and $\Re$ denotes the real part.  

Every section of $\omega$ of $T^* M \otimes T^*M$ naturally defines a section of $\mathrm{End}(TM) = T^*M \otimes TM$ by $\hat{\omega}(X) := \omega(X,\cdot)^{\sharp}$, which locally reads
\[
g(\hat{\omega}(e_i) , e_j) = \omega( e_i , e_j ) . 
\]

We now define the central property of spinors that we study in this paper: 
\begin{definition}[{\cite[Def. 3.5]{HS19}}] \label{def:pure}
    A spinor $\psi \in \Gamma(\Sigma(M,F,m))$ is said to be \emph{twisted pure} if the following two conditions are satisfied for all $1 \le k < l \le r$: 
    \begin{itemize}
        \item $\hat{\eta}^{\psi}_{kl} \circ \hat{\eta}^{\psi}_{kl} = -\mathrm{Id}$, and 
        \item $r = 2$ or $\left(\eta^{\psi}_{kl} + 2f_k f_l \right) \cdot \psi = 0$.  
    \end{itemize}
\end{definition}

Twisted pure spinors are of interest because they can be used to characterise Riemannian manifolds with special holonomy, as shown in \cite[Sec. 4]{HS19}. They generalise the classical notion of pure spinor for $r=2$ used to describe K{\"a}hler holonomy.   

\section{A Bochner-type formula}
\label{sec:parallel}

Throughout this section, $(M^n, g)$ is a compact spin$^r$ Einstein manifold with auxiliary vector bundle $F$ that admits a non-zero spin$^r$ spinor $\psi \in \Gamma (\Sigma (M,F,m))$ that is parallel with respect to an auxiliary connection $\theta$ with curvature $2$-form $\Theta$. For simplicity, we denote $\nabla = \nabla^{\theta}$. Since $\psi$ necessarily has constant length, we may assume without loss of generality that $\lvert \psi \rvert = 1$. 

Following the classical ideas of \cite{DWW05,DWW07}, we define a bundle map from the space of symmetric $2$-tensors to the space of $\Sigma (M,F,m)$-valued $1$-forms: 
\begin{equation}\label{eq:def_phi}
\begin{aligned}
\Phi \colon \mathrm{Sym}^2(T^*M) &\to \Sigma (M,F,m) \otimes T^*M \\
h &\mapsto \left( X \mapsto \hat{h}(X) \cdot \psi \right) .
\end{aligned}
\end{equation}
With respect to a local orthonormal frame $(e_1 , \dots , e_n)$, it takes the form
\[
\Phi(h) = h_{ij} e_i \cdot \psi \otimes e^j , 
\]
This map is easily seen to be  parallel and isometric. 

Let $\mathcal{D}$ be the Dirac operator on $\Sigma M \otimes T^*M$, given locally by 
\[
\mathcal{D} = e_i \cdot \nabla_{e_i} . 
\]
We now prove a relationship between the linearised Einstein operator $\nabla^* \nabla - 2 \mathring{R}$ on symmetric $2$-tensors and $\mathcal{D}^* \mathcal{D}$ via the bundle map $\Phi$.  

\begin{lemma}\label{lemma:dirac}
For each symmetric $2$-tensor $h$ on $M$, 
\begin{equation*}
    \mathcal{D}^* \mathcal{D} \Phi(h) = \Phi \left( \nabla^* \nabla h - 2 \mathring{R} h + (\mathrm{Ric} \circ h) \right) - \frac{1}{2} h_{ip} \left(\hat{\Theta}_{kl}\right)_{pj} e_i f_k f_l \cdot \psi \otimes e^j ,  
\end{equation*}
where $\Phi$ is interpreted as in \eqref{eq:def_phi}, extended in the obvious way to general (not necessarily symmetric) $2$-tensors.
\end{lemma}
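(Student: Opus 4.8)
The plan is to compute $\mathcal{D}^*\mathcal{D}\Phi(h)$ directly in the local orthonormal frame $(e_1,\dots,e_n)$ with $\nabla e_i(x)=0$, using that $\psi$ is parallel and $\Phi$ is parallel, and then compare term by term with $\Phi(\nabla^*\nabla h - 2\mathring{R}h + \mathrm{Ric}\circ h)$. First I would recall the Weitzenb\"ock/Bochner formula for the Dirac operator $\mathcal{D}$ on the twisted bundle $\Sigma M\otimes T^*M$: we have $\mathcal{D}^2 = \nabla^*\nabla + \mathcal{R}$, where $\mathcal{R}$ is the curvature term built from the Clifford action of the curvature of the connection on $\Sigma M\otimes T^*M$. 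Since the connection on this bundle is the tensor product of the spinor connection $\nabla^\theta$ on $\Sigma(M,F,m)$ (here $m$ is fixed, and only the $\Sigma M$ factor carries Clifford multiplication by $e_i$) and the Levi-Civita connection on $T^*M$, the curvature splits into three pieces: the spinorial curvature of $\Sigma M$, the auxiliary curvature $\Theta$ acting via $f_kf_l$, and the Riemannian curvature acting on the $T^*M$ factor. Also, since $\mathcal{D}$ is formally self-adjoint, $\mathcal{D}^*\mathcal{D} = \mathcal{D}^2$.

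Next I would evaluate each piece of $\mathcal{R}$ applied to $\Phi(h) = h_{ij}\,e_i\cdot\psi\otimes e^j$. For the spinorial curvature of $\Sigma M$: the standard Lichnerowicz identity gives $\sum_{i,j}e_i\cdot e_j\cdot R^{\Sigma M}_{e_ie_j} = \tfrac{1}{4}\mathrm{scal}_g$ on $\Sigma M$, but here the Clifford multiplication on the outer $e_i$ factor interacts with the curvature's Clifford factors and with the $T^*M$-index; carefully tracking indices, this contributes the $-2\mathring{R}h$ term together with part of a Ricci term. For the Riemannian curvature acting on the cotangent factor $e^j$, one uses $R_{e_i e_j}e^k$ and the first Bianchi identity; combined with the previous piece this assembles, after a standard (but bookkeeping-heavy) manipulation, into $\Phi(-2\mathring{R}h + \mathrm{Ric}\circ h)$ — this is exactly the computation underlying the Dai--Wang--Wei identity in the untwisted case, so I would follow \cite{DWW05,DWW07} here. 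The genuinely new piece is the auxiliary curvature term: the curvature of $\nabla^\theta$ along $\Sigma F$ (raised to the $m$-th power) contributes $\tfrac{1}{2}\sum_{k<l}(\Theta_{kl})\cdot$ acting by $f_kf_l$, and multiplying by the outer Clifford factor $e_i$ and contracting against $h_{ij}$ produces precisely the term $-\tfrac{1}{2}h_{ip}(\hat\Theta_{kl})_{pj}\,e_if_kf_l\cdot\psi\otimes e^j$. Here $\Theta_{kl}$ is the $2$-form-valued entry of the auxiliary curvature, and $\hat\Theta_{kl}$ is the associated endomorphism as defined in the preliminaries.

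I expect the main obstacle to be the careful index bookkeeping when the outer Clifford multiplication $e_i\cdot$ (coming from $\mathcal{D} = e_i\cdot\nabla_{e_i}$) meets the spinorial curvature endomorphism $R^{\Sigma M}_{e_ae_b} = \tfrac14 R_{abcd}\,e_c\cdot e_d\cdot$: one gets a product of three Clifford elements $e_i e_c e_d$, and reducing this using the Clifford relations $e_ie_c + e_ce_i = -2\delta_{ic}$ produces several terms, some of which recombine via the Bianchi identity into $\mathring{R}$ and $\mathrm{Ric}$ contributions while others cancel. A secondary subtlety is making sure that the $(\Sigma F)^{\otimes m}$ factor contributes the auxiliary curvature with the correct combinatorial factor (the $m$ copies each contribute, but the pure-spinor normalisation and the factor $\tfrac12$ must be tracked), and that Clifford multiplication by $f_k,f_l$ genuinely commutes past $e_i$ since they act on different tensor factors — this is what lets the $f_kf_l$ sit to the right of $e_i$ in the final expression. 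Once all terms are collected and the Riemannian pieces are packaged through $\Phi$ using that $\Phi$ is parallel (so $\Phi(\nabla^*\nabla h) = \nabla^*\nabla\Phi(h)$) and isometric, the claimed identity follows; the Einstein condition $\mathrm{Ric}_g = E g$ is not yet used here, as the $\mathrm{Ric}\circ h$ term is kept in general form.
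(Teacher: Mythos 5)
Your overall strategy --- computing $\mathcal{D}^*\mathcal{D}=\mathcal{D}^2$ via a Weitzenb\"ock formula on $\Sigma(M,F,m)\otimes T^*M$, splitting the curvature remainder according to the tensor factors, and using parallelism of $\psi$ and $\Phi$ --- is a legitimate repackaging of the paper's argument: the paper instead commutes the two covariant derivatives directly through the parallel map $\Phi$, and both routes land on the same intermediate expression $\Phi(\nabla^*\nabla h)+\sum_{k<l}\bigl(h(R_{e_le_k}e_i,e_j)+h(e_i,R_{e_le_k}e_j)\bigr)e_ke_le_i\cdot\psi\otimes e^j$. However, your account of where the auxiliary-curvature term comes from is wrong, and as described the plan does not close. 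The Weitzenb\"ock contribution of the $(\Sigma F)^{\otimes m}$ factor is $\sum_{a<b}h_{ij}\,e_ae_b\,e_i\cdot\bigl(R^{(\Sigma F)^{\otimes m}}_{e_ae_b}\psi\bigr)\otimes e^j$, in which the two form-indices of $\Theta$ are contracted against the Clifford elements $e_a,e_b$ produced by $\mathcal{D}^2$. The term in the lemma, $-\tfrac12 h_{ip}(\hat{\Theta}_{kl})_{pj}\,e_if_kf_l\cdot\psi\otimes e^j$, has a completely different index structure: there $\Theta$ is contracted against $h$ and the $1$-form index $j$, and only a single tangential Clifford factor $e_i$ survives. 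These are not the same tensor, and no amount of commuting $f_k,f_l$ past $e_i$ converts one into the other.

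What actually happens is this. The $\Sigma M$-curvature and the $(\Sigma F)^{\otimes m}$-curvature acting on $\psi$ cancel each other exactly, because their sum is the curvature of $\nabla^{\theta}$ on $\Sigma(M,F,m)$ and $\psi$ is $\nabla^{\theta}$-parallel; what survives from the spinorial piece is only the rotation of the Clifford factor $e_i$ in $e_i\cdot\psi$, and that piece alone reduces, via the first Bianchi identity and the Clifford relations, to $\Phi(\mathrm{Ric}\circ h)$. (If you instead apply the Lichnerowicz identity $\sum_{a<b}e_ae_b\cdot R^{\Sigma M}_{e_ae_b}=\tfrac14\mathrm{scal}_g$ wholesale to $e_i\cdot\psi$, you generate a term $\tfrac14\mathrm{scal}_g\,\Phi(h)$ that does not appear in the lemma and must then be cancelled against a degree-five Clifford expression coming from the $F$-curvature piece --- worse bookkeeping, same conclusion.) The Riemannian curvature acting on the $T^*M$ factor then produces, besides $-2\Phi(\mathring{R}h)$, a residual cubic Clifford term $\tfrac12 h_{ip}R_{pjkl}\,e_ie_ke_l\cdot\psi\otimes e^j$ that cannot be absorbed into $\Phi$ of anything. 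The auxiliary curvature enters only at this final stage, through a second application of the parallelism of $\psi$ in the form $R_{pjkl}\,e_ke_l\cdot\psi=-(\Theta_{kl})_{pj}\,f_kf_l\cdot\psi$, which converts that residue into the claimed $\Theta$-term. Your proposal never isolates this residual cubic term nor invokes this identity, so the key mechanism producing the right-hand side of the lemma is missing.
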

\begin{proof}
Using that $\mathcal{D}^* = \mathcal{D}$ and that $\Phi$ is parallel, we compute 
\begin{align*}
    \mathcal{D}^* \mathcal{D} (\Phi(h)) &= \mathcal{D} \left( e_l \cdot \nabla_{e_l} \Phi(h) \right) = e_k \cdot \nabla_{e_k} \left( e_l \cdot \Phi (\nabla_{e_l} h) \right) \\
    &= e_k \cdot \left( (\nabla_{e_k} e_l) \cdot \Phi(\nabla_{e_l} h) + e_l \cdot \nabla_{e_k} \Phi(\nabla_{e_l} h)  \right) \\
    &= e_k e_l \cdot \nabla_{e_k} \Phi(\nabla_{e_l} h) = e_k e_l \cdot \Phi (\nabla_{e_k} \nabla_{e_l} h) \\
    &= \left( \nabla_{e_k} \nabla_{e_l} h \right) (e_i , e_j) e_k e_l e_i \cdot \psi \otimes e^j \\
    &= \left( - \left( \nabla_{e_k} \nabla_{e_k} h \right) (e_i , e_j) e_i + \sum_{k < l} \left( \nabla_{e_k} \nabla_{e_l} h - \nabla_{e_l} \nabla_{e_k} h \right) (e_i , e_j) e_k e_l e_i \right) \cdot \psi \otimes e^j \\
    &= \left( \left( \nabla^* \nabla h \right) (e_i , e_j) e_i + \sum_{k<l} \left( h \left( R_{e_l e_k} e_i , e_j \right) + h \left( e_i , R_{e_l e_k} e_j \right) \right) e_k e_l e_i \right) \cdot \psi \otimes e^j \\
    &= \Phi( \nabla^* \nabla h) + \frac{1}{2} h(R_{e_l e_k} e_i , e_j) e_k e_l e_i \cdot \psi \otimes e^j + \frac{1}{2} h(e_i , R_{e_l e_k} e_j) e_k e_l e_i \cdot \psi \otimes e^j \\
    &= \Phi( \nabla^* \nabla h) + \frac{1}{2} h_{jp} R_{lkip} e_k e_l e_i \cdot \psi \otimes e^j + \frac{1}{2} h_{ip} R_{lkjp} e_k e_l e_i \cdot \psi \otimes e^j . 
\end{align*}
Now, using the first Bianchi identity and the Clifford relations, we can simplify the last two terms of the previous sum as follows: 
\begin{align*}
    R_{lkip} e_k e_l e_i &= R_{pikl} e_k e_l e_i = -R_{pkli} e_k e_l e_i - R_{plik} e_k e_l e_i \\
    &= -R_{pkli} e_k e_l e_i + R_{plik} e_l e_k e_i + 2 \delta_{kl} R_{plik} e_i \\
    &= -R_{pkli} e_k e_l e_i - R_{plik} e_l e_i e_k - 2\delta_{ik} R_{plik} e_l + 2R_{pkik} e_i \\ 
    &= -2R_{pkli} e_k e_l e_i - 2 \mathrm{Ric}_{pi} e_i \\
    &= -\frac{2}{3} \sum_{\substack{k,\,l,\,i \\ \text{distinct}}} \left( R_{pkli} + R_{plik} + R_{pikl} \right) e_k e_l e_i \\
    &- 2R_{pkki} e_k e_k e_i - 2R_{pklk} e_k e_l e_k - 2 \mathrm{Ric}_{pi} e_i \\
    &= -4 R_{pklk} e_l - 2\mathrm{Ric}_{pi} e_i = 2 \mathrm{Ric}_{pl} e_l , 
\end{align*}
and
\begin{align*}
    R_{lkjp} e_k e_l e_i &= - R_{jpkl} e_k e_l e_i = R_{jpkl} e_k e_i e_l + 2 \delta_{il} R_{jpkl} e_k \\
    &= -R_{jpkl} e_i e_k e_l - 2 \delta_{ik} R_{jpkl} + 2 \delta_{il} R_{jpkl} e_k \\
    &= -R_{jpkl} e_i e_k e_l - 2 R_{jpil} e_l + 2 R_{jpki} e_k \\
    &= -R_{jpkl} e_i e_k e_l - 4 R_{jpik} e_k . 
\end{align*}
Hence, we obtain 
\begin{align*}
    \mathcal{D}^* \mathcal{D} \Phi(h) &= \Phi(\nabla^* \nabla h) + h_{jp} \mathrm{Ric}_{pl} e_l \cdot \psi \otimes e^j + \frac{1}{2} h_{ip} \left( -R_{jpkl} e_i e_k e_l - 4 R_{jpik} e_k \right) \cdot \psi \otimes e^j \\
    &= \Phi(\nabla^* \nabla h) +  \left((\mathrm{Ric} \circ h)_{lj} e_l - 2 (\mathring{R}h)_{kj} e_k - \frac{1}{2} h_{ip} R_{jpkl} e_i e_k e_l \right) \cdot \psi \otimes e_j \\
    &= \Phi \left( \nabla^* \nabla h + (\mathrm{Ric} \circ h) - 2(\mathring{R}h) \right) + \frac{1}{2} h_{ip} R_{pjkl} e_i e_k e_l \cdot \psi \otimes e^j . 
\end{align*}
Finally, since $\psi$ is parallel, 
\begin{align*}
    0 = R_{pjkl} e_k e_l \cdot \psi + \left(\Theta_{kl}\right)_{pj} f_k f_l \cdot \psi , 
\end{align*}
and this finishes the proof. 
\end{proof}

\begin{corollary}\label{cor:inequality}
    For each symmetric $2$-tensor $h$ on $M$, 
    \begin{equation}\label{eq:ineq}
    \left\langle \nabla^* \nabla h - 2 \mathring{R} h , h \right\rangle \ge - \left \langle \mathrm{Ric} \circ h , h \right \rangle - \frac{1}{2} \left\langle \hat{\eta}^{\psi}_{kl} \circ h \circ \hat{\Theta}_{kl} , h \right\rangle . 
    \end{equation}
\end{corollary}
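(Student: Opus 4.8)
The plan is to take the $L^2$ inner product of the identity in Lemma~\ref{lemma:dirac} with $\Phi(h)$ and exploit the non-negativity of $\mathcal{D}^*\mathcal{D}$. Since $M$ is compact, $\langle \mathcal{D}^*\mathcal{D}\,\Phi(h),\Phi(h)\rangle = \lVert \mathcal{D}\Phi(h)\rVert^2 \ge 0$, where from now on $\langle\cdot,\cdot\rangle$ denotes the $L^2$ inner product (so that \eqref{eq:ineq} is an integrated inequality, which is what is needed for stability). Because $\Phi$ is $L^2$-isometric, and remains isometric when extended to arbitrary $2$-tensors---as follows from the coordinate formula $\Phi(\omega)=\omega_{ij}e_i\cdot\psi\otimes e^j$ together with $\Re\langle e_i\cdot\psi,e_a\cdot\psi\rangle=\delta_{ia}$---pairing the first term on the right-hand side of Lemma~\ref{lemma:dirac} with $\Phi(h)$ yields exactly $\langle \nabla^*\nabla h - 2\mathring{R}h + \mathrm{Ric}\circ h,\,h\rangle$. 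Rearranging, the corollary reduces to the single claim
\[
\big\langle\, h_{ip}(\hat{\Theta}_{kl})_{pj}\,e_i f_k f_l\cdot\psi\otimes e^j,\ \Phi(h)\,\big\rangle \;=\; -\,\big\langle \hat{\eta}^{\psi}_{kl}\circ h \circ \hat{\Theta}_{kl},\ h \big\rangle,
\]
since substituting this (and keeping track of the factor $-\tfrac12$ in Lemma~\ref{lemma:dirac}) produces precisely \eqref{eq:ineq}.

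Proving this identity is the one genuinely computational step, and the main---though mild---obstacle, as it requires pinning down several sign conventions. Contracting the $T^*M$-factors against $\Phi(h)=h_{ab}\,e_a\cdot\psi\otimes e^b$ forces $b=j$, so the left-hand side is a sum of terms $h_{ip}(\hat{\Theta}_{kl})_{pj}h_{aj}\,\Re\langle e_i f_k f_l\cdot\psi,\,e_a\cdot\psi\rangle$. I would evaluate the spinorial factor using the skew-adjointness of Clifford multiplication by vectors and the Clifford relation $e_a e_i = e_a\wedge e_i - \delta_{ai}$, which give
\[
\langle e_i f_k f_l\cdot\psi,\,e_a\cdot\psi\rangle \;=\; -\langle (e_a\wedge e_i)\, f_k f_l\cdot\psi,\,\psi\rangle \;+\; \delta_{ai}\,\langle f_k f_l\cdot\psi,\,\psi\rangle .
\]
By the defining formula for $\eta^{\psi}_{kl}$, the real part of the first summand is $-\eta^{\psi}_{kl}(e_a,e_i)$. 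The second summand drops out: since $\Theta$ is $\mathfrak{so}(r)$-valued, $(\hat{\Theta}_{kl})_{pj}$ is antisymmetric in $k,l$, so only indices $k\neq l$ contribute, and for $k\neq l$ the endomorphism $f_k f_l$ is skew-Hermitian, making $\langle f_k f_l\cdot\psi,\psi\rangle$ purely imaginary. Hence $\Re\langle e_i f_k f_l\cdot\psi,\,e_a\cdot\psi\rangle = -\eta^{\psi}_{kl}(e_a,e_i)$.

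It then remains to recognise the resulting index sum $-\sum h_{ip}(\hat{\Theta}_{kl})_{pj}h_{aj}\,\eta^{\psi}_{kl}(e_a,e_i)$ as $-\langle \hat{\eta}^{\psi}_{kl}\circ h \circ \hat{\Theta}_{kl},\,h\rangle$. Reading off the matrices of $\hat{h}$, $\hat{\Theta}_{kl}$ and $\hat{\eta}^{\psi}_{kl}$ from the convention $g(\hat{\omega}(e_i),e_j)=\omega(e_i,e_j)$, and using the symmetry of $h$ together with the cyclicity of the trace, this is a routine check. Substituting the resulting identity into the inequality obtained in the first paragraph completes the proof of \eqref{eq:ineq}.
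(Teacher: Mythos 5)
Your proposal is correct and follows essentially the same route as the paper: pair the identity of Lemma~\ref{lemma:dirac} with $\Phi(h)$, discard the non-negative term $\lVert\mathcal{D}\Phi(h)\rVert^2$, use that $\Phi$ is isometric, and convert the remaining curvature term into $-\tfrac12\langle\hat{\eta}^{\psi}_{kl}\circ h\circ\hat{\Theta}_{kl},h\rangle$ via skew-adjointness of Clifford multiplication and the definition of $\eta^{\psi}_{kl}$. If anything, you are slightly more explicit than the paper in noting that the $\delta_{ai}\langle f_kf_l\cdot\psi,\psi\rangle$ contribution is purely imaginary and hence drops out, and that the inequality is to be read at the $L^2$ level.
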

\begin{proof}
    Using Lemma \ref{lemma:dirac} and the fact that $\Phi$ is isometric, we compute
    \begin{align*}
        \left\langle \nabla^* \nabla h - 2 \mathring{R} h , h \right\rangle &= \left\langle \Phi (\nabla^* \nabla h - 2 \mathring{R} h) , \Phi(h) \right\rangle \\
        &= \left\langle \mathcal{D}^* \mathcal{D} \Phi(h) , \Phi(h) \right\rangle - \left\langle \Phi(\mathrm{Ric} \circ h) , \Phi(h) \right\rangle \\
        &+ \frac{1}{2} h_{ip} \left( \hat{\Theta}_{kl} \right)_{pj} \left\langle e_i f_k f_l \cdot \psi \otimes e^j , \Phi(h) \right\rangle \\
        &\ge - \left\langle \mathrm{Ric} \circ h , h \right\rangle + \frac{1}{2} h_{ip} \left( \hat{\Theta}_{kl} \right)_{pj} \left\langle e_i f_k f_l \cdot \psi \otimes e^j , h_{st} e_s \cdot \psi \otimes e^s \right\rangle \\
        &= - \left\langle \mathrm{Ric} \circ h , h \right\rangle + \frac{1}{2} h_{ip} h_{qj} \left( \hat{\Theta}_{kl} \right)_{pj} \left\langle e_i f_k f_l \cdot \psi \otimes e^j , e_q \cdot \psi \otimes e^j \right\rangle \\
        &= - \left\langle \mathrm{Ric} \circ h , h \right\rangle - \frac{1}{2} h_{ip} h_{qj} \left( \hat{\Theta}_{kl} \right)_{pj} \left\langle e_q e_i f_k f_l \cdot \psi , \psi \right\rangle \\
        &= - \left\langle \mathrm{Ric} \circ h , h \right\rangle - \frac{1}{2} h_{ip} h_{qj} \left( \hat{\Theta}_{kl} \right)_{pj} \left(\eta^{\psi}_{kl} \right)_{qi} \\
        &= - \left\langle \mathrm{Ric} \circ h , h \right\rangle - \frac{1}{2} \left\langle \hat{\eta}^{\psi}_{kl} \circ h \circ \hat{\Theta}_{kl} , h \right\rangle . 
    \end{align*}
\end{proof}

\begin{remark}
    Every oriented Riemannian $n$-manifold admits a natural spin$^n$ structure, given by taking the tangent bundle itself as the auxiliary vector bundle, which we endow with the Levi-Civita connection. This structure carries a non-zero parallel spin$^n$ spinor \cite[Prop. 3.3]{EH16}. An elementary calculation applying Corollary \ref{cor:inequality} to this spinor yields
    \begin{equation*}
        \left\langle \nabla^* \nabla h - 2 \mathring{R} h , h \right\rangle \ge - \left\langle \mathrm{Ric} \circ h , h \right\rangle - \left\langle \mathring{R}h , h \right\rangle . 
    \end{equation*}
    In particular, if an Einstein metric with Einstein constant $E$ satisfies $\mathring{R} < E$, then it is stable, which recovers a well-known result by Koiso \cite[Thm. 3.3]{K78}. 
\end{remark}

In the next section, we assume an additional condition on $\psi$ that will establish a relation between $\hat{\eta}^{\psi}_{kl}$ and $\hat{\Theta}_{kl}$ and will allow us to deduce stability in some cases from \eqref{eq:ineq}. 

\section{Stability of metrics with parallel twisted pure spinors}
\label{sec:pure}

We continue to assume that $(M^n, g)$ is a compact Einstein manifold endowed with a parallel spin$^r$ spinor $\psi$ with $\lvert \psi \rvert = 1$. As mentioned earlier, we will now impose a further condition on $\psi$, namely that 
\[
\psi = \frac{1}{\lvert \psi' \rvert} \psi' , 
\]
with $\psi'$ twisted pure -- see Definition \ref{def:pure}, and note that it is \emph{not} invariant by rescaling. 

Noting that there is a sign error in \cite[eq. (4.1)]{HS19}, we have the following: 

\begin{lemma}[{\cite[Lem. 4.8]{HS19}}]\label{lemma:herrera}
    Let $(M,g)$ be a Riemannian manifold carrying a parallel twisted pure spin$^r$ spinor $\psi'$, and let $\Theta = \Theta_{kl} f_k \wedge f_l$ be the curvature form of the auxiliary connection. Then, if $r \ge 3$, $r \neq 4$, $n \neq 8$, and $n+4r-16 \neq 0$, 
    \begin{equation*}
        \hat{\Theta}_{kl} = \frac{- \mathrm{scal}_g}{n(\frac{n}{4} + 2r - 4)} \hat{\eta}^{\psi'}_{kl} , 
    \end{equation*}
    for $1 \le k < l \le r$, and $g$ is Einstein. \qed
\end{lemma}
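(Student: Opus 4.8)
The plan is to extract the stated identity from the integrability condition that parallelism of $\psi'$ imposes, and then to fix the scalar factor by a trace computation. First I would record the spinorial curvature equation. Since $\psi'$ is parallel for $\nabla = \nabla^{\theta}$, the curvature of $\nabla^{\theta}$ on $\Sigma(M,F,m)$ annihilates it; decomposing that curvature into its tangential part (built from $R$) and its auxiliary part (built from $\Theta$), exactly as at the end of the proof of Lemma \ref{lemma:dirac}, yields for all $p,j$ the single algebraic relation
\[
R_{pjkl}\, e_k e_l \cdot \psi' + (\Theta_{kl})_{pj}\, f_k f_l \cdot \psi' = 0 .
\]
This is the only input linking the Riemann curvature to the auxiliary curvature $\Theta$, and the whole argument is a matter of contracting it against the purity data of $\psi'$.

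Next I would Clifford-multiply this relation by $e_j$ and sum over $j$. On the tangential term, the same contracted first Bianchi manipulation carried out inside the proof of Lemma \ref{lemma:dirac} (there giving $R_{lkip}\,e_k e_l e_i = 2\,\mathrm{Ric}_{pl}\,e_l$) collapses the $3$-form part and reduces it to a multiple of $\mathrm{Ric}_{pm}\,e_m \cdot \psi'$. On the auxiliary term, $\sum_{j}(\Theta_{kl})_{pj}\,e_j = \hat{\Theta}_{kl}(e_p)$, and the twisted purity condition $\big(\eta^{\psi'}_{kl} + 2 f_k f_l\big)\cdot\psi' = 0$ of Definition \ref{def:pure} lets me replace each $f_k f_l \cdot \psi'$ by $-\tfrac{1}{2}\,\eta^{\psi'}_{kl}\cdot\psi'$. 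The outcome is a purely tangential spinorial relation of the schematic form $\mathrm{Ric}_{pm}\,e_m\cdot\psi' \sim \sum_{k<l}\hat{\Theta}_{kl}(e_p)\cdot\eta^{\psi'}_{kl}\cdot\psi'$. Pairing this with test spinors of the form $e_a\cdot\psi'$ and reading off components through the definition $\eta^{\psi'}_{kl}(e_a,e_b) = \Re\langle e_a e_b f_k f_l\cdot\psi',\psi'\rangle$, together with the algebraic identities satisfied by the $\eta^{\psi'}_{kl}$ for a twisted pure spinor, should simultaneously force $\mathrm{Ric}$ to be a multiple of $g$ -- so that $g$ is Einstein -- and force each endomorphism $\hat{\Theta}_{kl}$ to be proportional to $\hat{\eta}^{\psi'}_{kl}$.

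With proportionality $\hat{\Theta}_{kl} = c\,\hat{\eta}^{\psi'}_{kl}$ established, I would determine $c$ by a trace. The first purity condition $\hat{\eta}^{\psi'}_{kl}\circ\hat{\eta}^{\psi'}_{kl} = -\mathrm{Id}$ gives $|\hat{\eta}^{\psi'}_{kl}|^2 = -\mathrm{tr}\big(\hat{\eta}^{\psi'}_{kl}\circ\hat{\eta}^{\psi'}_{kl}\big) = n$, so contracting the proportionality against $\hat{\eta}^{\psi'}_{kl}$ and summing over $1\le k<l\le r$ evaluates precisely the auxiliary contribution appearing in the fully contracted (scalar-curvature) version of the integrability relation. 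Matching the two expressions isolates $c$ as a universal multiple of $\mathrm{scal}_g$; the $\binom{r}{2}$ pairs and the Clifford contractions of a vector against a $2$-form assemble the combinatorial factors into the denominator $n\big(\tfrac{n}{4} + 2r - 4\big)$, giving the claimed value.

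I expect the main obstacle to be the middle step: keeping track of the products and contractions of the $f_k f_l$ and the $\eta^{\psi'}_{kl}$ inside the Clifford algebra, i.e.\ the representation-theoretic identities that a twisted pure spinor must satisfy. This is also where the hypotheses enter. The combinatorial bookkeeping produces denominators that degenerate at the excluded configurations, so that $r\ge 3$, $r\ne 4$, $n\ne 8$ and $n+4r-16\ne 0$ are exactly the non-degeneracy conditions under which the linear relations can be solved for a single scalar $c$ and under which proportionality is genuinely forced rather than merely consistent. Note that the factor $\tfrac{n}{4}+2r-4 = \tfrac{1}{4}(n+8r-16)$ in the final denominator is automatically positive for $r\ge 3$, so it is the separate factor $n+4r-16$ that must be excluded by hypothesis; checking this dichotomy carefully is the delicate point of the constant computation.
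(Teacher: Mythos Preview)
The paper does not contain a proof of this lemma: it is quoted verbatim from \cite[Lem.~4.8]{HS19} and closed with a \verb|\qed|, so there is nothing in the present paper to compare your proposal against. Your sketch is a plausible outline of how such a result is typically derived --- start from the integrability condition $R_{pjkl}\,e_k e_l\cdot\psi' + (\Theta_{kl})_{pj}\,f_k f_l\cdot\psi' = 0$, use the second purity identity to convert $f_k f_l\cdot\psi'$ into tangential data, and trace to pin down the constant --- but whether the detailed Clifford bookkeeping actually closes up, and whether the excluded cases $r=4$, $n=8$, $n+4r-16=0$ arise exactly where you predict, cannot be checked against this paper; you would need to consult \cite{HS19} directly.
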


We can now prove the main result of this paper, Theorem \ref{thm:main}:  

\begin{proof}[Proof of Theorem \ref{thm:main}]
The fact that $g$ is Einstein follows from Lemma \ref{lemma:herrera}. Let $E$ be the Einstein constant, $\psi'$ a parallel twisted pure spin$^r$ spinor, $\psi = \psi' / \lvert \psi' \rvert$, and $\Theta = \Theta_{kl} f_k \wedge f_l$ the curvature $2$-form of the auxiliary connection. By Corollary \ref{cor:inequality}, 
\begin{align*}
    \left\langle \nabla^* \nabla h - 2 \mathring{R} h , h \right\rangle &\ge - \left \langle \mathrm{Ric} \circ h , h \right \rangle - \frac{1}{2} \left\langle \hat{\eta}^{\psi}_{kl} \circ h \circ \hat{\Theta}_{kl} , h \right\rangle \\
    &= -E \lvert h \rvert^2 + \frac{E}{\frac{n}{2} + 4r - 8} \left\langle \hat{\eta}^{\psi}_{kl} \circ h \circ \hat{\eta}^{\psi}_{kl} , h \right\rangle . 
\end{align*}
If $E = 0$, the result follows immediately, so suppose that $E < 0$. Then, by the triangle inequality, the Cauchy-Schwarz inequality, and the fact that $\hat{\eta}^{\psi'}_{kl} \circ \hat{\eta}^{\psi'}_{kl} = -\mathrm{Id}$, 
\begin{align*}
\left\lvert \left\langle \hat{\eta}^{\psi}_{kl} \circ h \circ \hat{\eta}^{\psi}_{kl} , h \right\rangle \right\rvert &\le \lvert h \rvert \lvert \hat{\eta}^{\psi}_{kl} \circ h \circ \hat{\eta}^{\psi}_{kl} \rvert = \frac{\lvert h \rvert}{\lvert \psi' \rvert^2} \lvert \hat{\eta}^{\psi'}_{kl} \circ h \circ \hat{\eta}^{\psi'}_{kl} \rvert \\
&= \frac{\lvert h \rvert}{\lvert \psi' \rvert^2} \mathrm{tr} \left( \left( \hat{\eta}^{\psi'}_{kl} \circ h \circ \hat{\eta}^{\psi'}_{kl} \right) \circ \left( \hat{\eta}^{\psi'}_{kl} \circ h \circ \hat{\eta}^{\psi'}_{kl} \right)^t \right)^{1/2} \\
&= \frac{\lvert h \rvert}{\lvert \psi' \rvert^2} \mathrm{tr} \left( \hat{\eta}^{\psi'}_{kl} \circ h \circ \hat{\eta}^{\psi'}_{kl} \circ \hat{\eta}^{\psi'}_{kl} \circ h \circ \hat{\eta}^{\psi'}_{kl} \right)^{1/2} \\
&= r(r-1) \frac{\lvert h \rvert}{\lvert \psi' \rvert^2} \mathrm{tr} \left( h \circ h \right)^{1/2} = r(r-1) \frac{\lvert h \rvert^2}{\lvert \psi' \rvert^2} . 
\end{align*}
And, by \cite[Thm. 3.1]{EH16}, 
\begin{equation*}
\lvert \psi' \rvert^2 \mathrm{Ric} = \hat{\eta}^{\psi'}_{kl} \circ \hat{\Theta}_{kl} = - \frac{E}{\frac{n}{4} + 2r - 4} \hat{\eta}^{\psi'}_{kl} \circ \hat{\eta}^{\psi'}_{kl} = E \, \frac{ r(r-1)}{\frac{n}{4} + 2r - 4} \mathrm{Id} ,  
\end{equation*}
which implies that 
\begin{equation*}
    \lvert \psi' \rvert^2 = \frac{r(r-1)}{\frac{n}{4} + 2r - 4} . 
\end{equation*}
Putting everything together, we see that
\begin{align*}
    \left\lvert \frac{E}{\frac{n}{2} + 4r - 8} \left\langle \hat{\eta}^{\psi}_{kl} \circ h \circ \hat{\eta}^{\psi}_{kl} , h \right\rangle \right\rvert &\le \frac{\lvert E \rvert}{\frac{n}{2} + 4r - 8} r(r-1) \frac{\lvert h \rvert^2}{\lvert \psi' \rvert^2} \\
    &= \frac{\lvert E \rvert}{\frac{n}{2} + 4r - 8} r(r-1) \lvert h \rvert^2 \frac{(\frac{n}{4} + 2r - 4)}{r(r-1)} = \lvert E \rvert \lvert h \rvert^2 , 
\end{align*}
which concludes the proof. 
\end{proof}

As an application of this result, recall that an oriented Riemannian $4m$-dimensional manifold is quaternion-K{\"a}hler if and only if it admits a parallel twisted pure spin$^3$ spinor. Hence, from Theorem \ref{thm:main} one deduces that every quaternion-K{\"a}hler manifold of negative scalar curvature of dimension greater than $8$ is linearly semi-stable. It has recently been proved by different means that, in fact, quaternion-K{\"a}hler manifolds of negative scalar curvature are strictly stable \cite{KS24}.  


\bibliographystyle{alphaurl}
\bibliography{references.bib}

\end{document}